\newcommand{\lra}{\longrightarrow}
\newcommand{\RR}{\mathbb{R}}
\newcommand{\vep}{\varepsilon}
\newcommand*{\defeq}{\mathrel{\rlap{%
                     \raisebox{0.25ex}{$\m@th\cdot$}}%
                     \raisebox{-0.25ex}{$\m@th\cdot$}}%
                     =}
\newcommand*\owedge{\mathpalette\@owedge\relax}
\newcommand*\@owedge[1]{%
  \mathbin{%
    \ooalign{%
      $#1\m@th\bigcirc$\cr
      \hidewidth$#1\m@th\wedge$\hidewidth\cr
    }%
  }%
}
\newtheorem{thm}{Theorem}
\newtheorem{lemma}{Lemma}
\newtheorem{cor}{Corollary}
\newtheorem{prop}{Proposition}
\newtheorem*{definition-non}{Definition}
\newtheorem*{theorem-non}{Theorem}
\newtheorem*{proposition-non}{Proposition}
\newtheorem*{lemma-non}{Lemma}
\newtheorem*{corollary-non}{Corollary}
\newcommand{\beqa}{\begin{eqnarray}}
\newcommand{\beq}{\begin{equation}}
\newcommand{\eeqa}{\end{eqnarray}}
\newcommand{\eeq}{\end{equation}}
\newcommand\ipl[2]{\langle {#1},{#2}\rangle_{\!g_{\scalebox{0.3}{\emph{L}}}}}
\newcommand\ipr[2]{\langle {#1},{#2}\rangle_{\!\scalebox{0.7}{\emph{g}}}}
\newcommand\ww[2]{#1 \wedge #2}
\newcommand\imp{\hspace{.2in}\Rightarrow\hspace{.2in}}
\newcommand\cd[2]{\nabla_{\!#1}{#2}}
\newcommand\cdL[2]{\nabla^{\scalebox{0.4}{$L$}}_{\!#1}{#2}}
\newcommand\gL{g_{\scalebox{0.4}{$L$}}}
\newcommand\TgL{\tilde{g}_{\scalebox{0.4}{$L$}}}
\newcommand\Ric{\text{Ric}}
\newcommand\RicL{\text{Ric}_{\scalebox{0.4}{$L$}}}
\newcommand\Rmr{\text{Rm}}
\newcommand\RmL{\text{Rm}_{\scalebox{0.4}{$L$}}}
\newcommand\LCL{\nabla^{\scalebox{0.4}{$L$}}}
\newcommand\comma{\hspace{.2in},\hspace{.2in}}
\newcommand\commas{\hspace{.1in},\hspace{.1in}}
\newcommand\co{\hat{R}}
\providecommand{\customgenericname}{}
\newcommand{\newcustomtheorem}[2]{%
  \newenvironment{#1}[1]
  {%
   \renewcommand\customgenericname{#2}%
   \renewcommand\theinnercustomgeneric{##1}%
   \innercustomgeneric
  }
  {\endinnercustomgeneric}
}
\begin{document}
\title[]{On the Curvature and Topology of Compact Stationary Spacetimes}
\author[]{Amir Babak Aazami}
\address{Clark University\hfill\break\indent
Worcester, MA 01610}
\email{aaazami@clarku.edu}

\maketitle
\begin{abstract}
Using the result of Petersen \& Wink '21, we find obstructions to the curvature and topology of compact Lorentzian manifolds admitting a unit-length timelike Killing vector field.
\end{abstract}

%%%%%%%%%%%
\section{Introduction}
In this article we apply the recent work \cite{PW} to find curvature and topological obstructions to compact \emph{stationary spacetimes}; i.e., compact Lorentzian manifolds with a timelike Killing vector field.  Little is known about the Betti numbers of such manifolds other than the following: \cite{kamishima} showed that compact flat stationary Lorentzian manifolds must have nonzero first Betti number; \cite{RS96} then improved upon this result in two respects: By using Bochner's technique, they showed that, up to covering, only tori satisfy this, and that compact Ricci-flat stationary Lorentzian manifolds must also have nonzero first Betti number. (For other obstructions, e.g., regarding sectional curvature, or isometry groups, or for Kundt spacetimes in place of stationary ones, see, e.g., \cite{CM,markvorsen,carriere,Klingler,baum,piccione,schliebner,RS2}.) Here we show that, by using \cite{PW}, further information can be gained about higher Betti numbers. In the latter, the Lichnerowicz Laplacian\,---\,which extends the Bochner method by giving conditions for when harmonic tensors are parallel\,---\,was used to relate Betti numbers $b_i$ to the eigenvalues of the curvature operator of a closed Riemannian $n$-manifold $(M,g)$.  In particular, by generalizing a result in \cite{poor}, \cite{PW} showed that, for $n \geq 3$ and $1 \leq p \leq \lfloor \frac{n}{2}\rfloor$, if $g$'s curvature operator is $(n-p)$-positive\,---\,i.e., the sum of its smallest $n-p$ eigenvalues is positive\,---\,then $b_1=\cdots = b_p = 0$ and $b_{n-p} = \cdots = b_{n-1} = 0$. We apply this result here to compact Lorentzian $n$-manifolds $(M,\gL)$ admitting a timelike Killing vector field $T$, by analyzing the Riemannian metric
\beqa
\label{eqn:gR}
g \defeq \gL - 2\frac{T^\flat \otimes T^\flat}{\gL(T,T)} \comma T^\flat \defeq \gL(T,\cdot).
\eeqa
Not only are compact stationary spacetimes an appealing class of manifolds to study\,---\,e.g., they are always geodesically complete \cite{RS95}, something that is not guaranteed in the Lorentzian setting\,---\,but every stationary spacetime admits a ``normalized" one in its conformal class (i.e., $\gL(T,T)=-1$ with $T$ still a timelike Killing vector field). As we are interested only in the topology of $M$, restricting our attention to such normalized stationary spacetimes is not an impediment\,---\,especially given the complicated relationship between the curvature tensors of $g$ and $\gL$. However, as we show in Proposition \ref{prop:basis}, the virtue of a unit-length Killing vector field $T$ is that there always exist canonical local frames with respect to which the curvature operators of $g$ and $\gL$ are so closely aligned that the former effectively becomes the ``symmetrization" of the latter. (This overcomes the fact that the curvature operator of a Lorentzian metric is neither symmetric nor diagonalizable in general, and can have complex eigenvalues.) As a consequence, we then use \cite{PW} to prove the following general results:
\begin{theorem-non}
Let $(M,\gL)$ be a closed, connected, and oriented Lorentzian 3-manifold with a unit-length timelike Killing vector field $T$. Relative to the local $\gL$-orthonormal frame \eqref{eqn:even}, set \emph{$\RmL(T,X_i,X_j,X_k) \defeq R_{Tijk}$}, etc., where \emph{$\RmL$} is the Riemann curvature 4-tensor of $\gL$.  If the symmetric matrix
$$
\begin{bmatrix}
-R_{T1T1} & 0 & R_{12T1}\\
0 & -R_{T2T2} & R_{12T2}\\
R_{T112} & R_{T212} & -R_{1212}+3R_{T1T1}+3R_{T2T2}
\end{bmatrix}
$$
is 2-positive, then $b_1 = b_2 = 0$. In general, if $(M,\gL)$ is odd-dimensional and the ${n\choose 2}\times {n \choose 2}$ analogue of this matrix is $(n-p)$-positive for $1 \leq p \leq \lfloor \frac{n}{2}\rfloor$, then $b_1=\cdots=b_p=b_{n-p}=\cdots=b_{n-1}=0$.
\end{theorem-non}

(The component $R_{T1T2} = 0$ in the frame \eqref{eqn:even}.) As an application, the $n$-torus\,---\,usually a source of examples of distinguished Lorentzian metrics \cite{RS96,S97}\,---\,can never admit such $(n-p)$-positive matrices (nor of those in the Theorem below). Another application is:

\begin{corollary-non}
Let $(M,\gL)$ be a closed, connected, and oriented Lorentzian 3-manifold with a unit-length timelike Killing vector field $T$, Ricci tensor \emph{$\RicL$}, and scalar curvature \emph{$\text{scal}_{\scalebox{0.4}{$L$}}$}. If $T$ is not parallel, and if
\emph{
$$
\text{scal}_{\scalebox{0.4}{$L$}}> 3\RicL(T,T) + |d\,\text{ln}(\RicL(T,T))|_{g_{\scalebox{0.3}{$L$}}}^2,
$$}then $b_1=b_2=0$. If $\gL$ is replaced by a Riemannian metric $g$, then the inequality becomes \emph{$\text{scal} > \Ric(T,T) + |d\,\text{ln}(\Ric(T,T))|_{g}^2$}.
\end{corollary-non}

The even-dimensional case is similar to the odd-dimensional case above, but here we also get obstructions to $k$-positivity of the eigenvalues:

\begin{theorem-non}
If $(M,\gL)$ is four-dimensional, then relative to the local $\gL$-orthonormal frame \eqref{eqn:even}, the symmetric matrix
$$
\begin{bmatrix}
-R_{T1T1} & R_{T2T1} & R_{T3T1} & R_{23T1} & R_{31T1} & R_{12T1}\\
R_{T1T2} & -R_{T2T2} & R_{T3T2} & R_{23T2} & R_{31T2} & R_{12T2}\\
R_{T1T3} & T_{T2T3} & -R_{T3T3} & R_{23T3} & R_{31T3} & R_{12T3}\\
R_{T123} & R_{T223} & R_{T323} & -R_{2323}+3R_{T2T2}+3R_{T3T3} & -R_{3123} & -R_{1223}\\
R_{T131} & R_{T231} & R_{T331} & -R_{2331} & -R_{3131} & -R_{1231}\\
R_{T112} & R_{T212} & R_{T312} & -R_{2312} & -R_{3112} & -R_{1212}
\end{bmatrix}
$$
cannot be 3-positive. If $(M,\gL)$ is $2n$-dimensional, then the ${2n\choose 2}\times {2n \choose 2}$ analogue of this matrix cannot be $n$-positive\emph{;} if it is $(2n-p)$-positive for $1 \leq p < n$, then $b_1=\cdots=b_p=b_{2n-p}=\cdots=b_{2n-1}=0$.
\end{theorem-non}

(In fact $R_{T1Ti} = R_{T2T3} = 0$ in the frame \eqref{eqn:even}.) The eigenvalues in both theorems are frame-independent, because these matrices are in fact the curvature operators, not of the Lorentzian metric $\gL$, but of the Riemannian metric \eqref{eqn:gR} expressed in Lorentzian data. The reason they take on such simple forms in terms of $\RmL$ is because of the existence of the distinguished frames mentioned above, particularly in dimensions three and four. 

%%%%%
\section{Relationship between $g$ and $\gL$}
The setup of our problem is as follows: Let $(M,\gL)$ be a closed, oriented Lorentzian $n$-manifold endowed with a timelike Killing vector field $T$; i.e., a compact and oriented \emph{stationary spacetime} without boundary. Note that $T$ will necessarily be a unit-length timelike Killing vector field of the conformal metric
\beqa
\label{eqn:conf}
\tilde{g}_{\scalebox{0.4}{$L$}} \defeq \frac{\gL}{-\gL(T,T)}\cdot
\eeqa
Now form the Riemannian metric
\beqa
\label{eqn:Riem}
g \defeq \gL - 2\frac{T^\flat \otimes T^\flat}{\gL(T,T)} \comma T^\flat \defeq \gL(T,\cdot).
\eeqa
Note that $g(T,T) = -\gL(T,T) > 0$, that every $\gL$-orthogonal local frame of the form $\{T,X_1,\dots,X_{n-1}\}$ is also $g$-orthogonal (in fact $g(T,\cdot) = -\gL(T,\cdot)$ and $g(X_i,\cdot) = \gL(X_i,\cdot)$), and finally that $T$ is also a Killing vector field with respect to $g$: $\mathfrak{L}_Tg = 0$ if and only if $\mathfrak{L}_Tg_{\scalebox{0.4}{$L$}} = 0$, where $\mathfrak{L}$ is the Lie derivative. Thus if $M$ admits a Riemannian metric $g$ with a nowhere vanishing Killing vector field $T$, then $M$ admits a stationary Lorentzian metric $\gL$, and thus also a normalized one via \eqref{eqn:conf}.  As shown, e.g.,  in \cite[Prop.~5]{RS2}, all of this can be accomplished by finding an $\mathbb{S}^1$-action on $M$ without fixed points. (For more general Killing vector fields, see \cite{flores2}.) With these preliminaries established, we would now like to relate the Riemann curvature 4-tensors of $g$ and $\gL$; in order to do so, let us first relate their Levi-Civita connections:

\begin{prop}
\label{prop:LC}
Let $(M,\gL)$ be a Lorentzian $n$-manifold with a timelike Killing vector field $T$. Let $g$ be the Riemannian metric defined via \eqref{eqn:Riem}. With respect to any $\gL$- and $g$-orthogonal local frame $\{T,X_1,\dots,X_{n-1}\}$ with each $X_i$ having unit length, the Levi-Civita connections $\nabla$ and $\LCL$ are related as follows:
\beqa
\arraycolsep=1.4pt\def\arraystretch{1.5}
\left\{\begin{array}{lr}
\cd{T}{T} = -\cdL{T}{T},\\
\cd{X_i}{T} = -\cdL{X_i}{T} + X_i(\emph{\text{ln}}(g(T,T))T,\\
\cd{X_i}{X_j} = \cdL{X_i}{X_j},\\
\cd{T}{X_i} = \cdL{T}{X_i} -2\cdL{X_i}{T} + X_i(\emph{\text{ln}}(g(T,T))T. \label{eqn:LCs}
\end{array}\right.
\eeqa
\end{prop}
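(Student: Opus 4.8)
The plan is to invoke the standard formula relating the Levi-Civita connections of two metrics, applied to the fact that $g$ and $\gL$ differ only by a rank-one tensor manufactured from $T$. Writing $\lambda \defeq -\gL(T,T) = g(T,T) > 0$, we have $g = \gL + S$ with $S \defeq \tfrac{2}{\lambda}\,T^\flat\otimes T^\flat$. Since $\nabla = \LCL + D$ for a symmetric $(1,2)$-tensor $D$ and $g$ is $\nabla$-parallel, the difference tensor obeys
\[
2g(D(X,Y),Z) \;=\; (\cdL{X}{g})(Y,Z) + (\cdL{Y}{g})(X,Z) - (\cdL{Z}{g})(X,Y),
\]
and since $\LCL\gL = 0$ the right-hand side is the same expression with $g$ replaced by $S$. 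So the first step is to compute $\cdL{X}{S}$ by the Leibniz rule.

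The second step records the two consequences of $T$ being $\gL$-Killing that feed into this: first, $\cdL{X}{T^\flat}$ is antisymmetric, i.e.\ $(\cdL{X}{T^\flat})(Y) = \gL(\cdL{X}{T},Y) = -\gL(\cdL{Y}{T},X)$; and second, $X(\lambda) = -X(\gL(T,T)) = -2\gL(\cdL{X}{T},T) = 2\gL(\cdL{T}{T},X)$, which in particular forces $\gL(\cdL{T}{T},T)=0$ and gives $X_i(\ln g(T,T)) = \tfrac{2}{\lambda}\gL(\cdL{T}{T},X_i)$. Substituting these into $\cdL{X}{S}$ expresses $(\cdL{X}{S})(Y,Z)$ entirely through the tensor $\LCL T^\flat$, the one-form $T^\flat$, and the function $\lambda$.

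The third step is bookkeeping. Evaluate the displayed identity on the frame $\{T,X_1,\dots,X_{n-1}\}$, read off the scalars $g(D(X,Y),T)$ and $g(D(X,Y),X_j)$, and reconstruct the vector $D(X,Y) = \tfrac{1}{g(T,T)}\,g(D(X,Y),T)\,T + \sum_j g(D(X,Y),X_j)\,X_j$ using $g(X_j,X_j) = 1$; throughout one must remember $g(T,\cdot) = -\gL(T,\cdot)$ while $g(X_i,\cdot) = \gL(X_i,\cdot)$. Taking $(X,Y) = (T,T)$ yields $D(T,T) = -2\cdL{T}{T}$; taking $(X,Y) = (X_i,T)$ yields $D(X_i,T) = -2\cdL{X_i}{T} + X_i(\ln g(T,T))\,T$; taking $(X,Y) = (X_i,X_j)$ yields $D(X_i,X_j) = 0$. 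Adding $\LCL$ back gives the first three lines of \eqref{eqn:LCs}, and the fourth line follows from $\cd{T}{X_i} = \cdL{T}{X_i} + D(T,X_i)$ together with the symmetry $D(T,X_i) = D(X_i,T)$.

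The main obstacle is purely clerical: correctly tracking the sign asymmetry between the $T$-direction, where $g = -\gL$, and the $X_i$-directions, where $g = \gL$, as one passes between $g$- and $\gL$-inner products inside the connection-difference formula. A convenient internal consistency check is that the two independent computations of $D(X_i,T)$ and of $D(T,X_i)$ must agree, since $D$ is symmetric. Equivalently, one may skip the formula and verify directly that the connection defined by the right-hand sides of \eqref{eqn:LCs} is torsion-free and $g$-compatible, then appeal to uniqueness of the Levi-Civita connection; the underlying computation of $\cdL{X}{S}$ is identical either way.
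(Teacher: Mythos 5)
Your proposal is correct, and it reaches all four identities by a route that is organized differently from the paper's. The paper writes out the Koszul formula for $g(\cd{X}{Y},Z)$ with $X,Y,Z$ drawn from the frame $\{T,X_1,\dots,X_{n-1}\}$ and compares it term by term with the Koszul formula for $\gL(\cdL{X}{Y},Z)$, using $g(X_i,\cdot)=\gL(X_i,\cdot)$ and $g(T,\cdot)=-\gL(T,\cdot)$ to convert each term; it establishes $\cd{X_i}{X_j}=\cdL{X_i}{X_j}$ first and then obtains the remaining three lines from the Killing condition and torsion-freeness. You instead invoke the closed-form expression for the difference tensor $D=\nabla-\LCL$ in terms of $\cdL{}{g}=\cdL{}{S}$, where $S=\tfrac{2}{\lambda}\,T^\flat\otimes T^\flat$ and $\lambda=g(T,T)$, and compute $\cdL{}{S}$ by the Leibniz rule; the Killing hypothesis then enters only through the antisymmetry of $\LCL T^\flat$ and the identity $X(\lambda)=2\gL(\cdL{T}{T},X)$, which also forces $\gL(\cdL{T}{T},T)=0$ as needed in the $(T,T)$ case. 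I have checked your three evaluations $D(T,T)=-2\cdL{T}{T}$, $D(X_i,T)=-2\cdL{X_i}{T}+X_i(\text{ln}(g(T,T)))T$, and $D(X_i,X_j)=0$ against the difference-tensor formula, and they are correct, as is the passage to the fourth line via the symmetry of $D$ (where the paper instead uses the bracket $[T,X_i]$). Your approach is tensorial and frame-independent until the final read-off, treats all four cases uniformly, and isolates exactly where the Killing condition is used; the paper's approach avoids introducing $S$ and the difference-tensor formula but must track the sign flip $g(T,\cdot)=-\gL(T,\cdot)$ inside each Koszul term. Both arguments are complete at essentially the same level of detail.
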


\begin{proof}
A Koszul formula for $\nabla-\LCL$ is derived in \cite[Proposition~2.3]{olea}, in the case of a general unit-length timelike vector field $T$. Modifying that formula to our setting\,---\,with $T$ a Killing vector field, though not necessarily unit length\,---\,the relations \eqref{eqn:LCs} can be straightforwardly derived. Indeed, 
\beqa
g(\cd{X_i}{X_j},X_k) \!\!&=&\!\! \frac{1}{2}\Big[X_i(g(X_j,X_k)) + X_j(g(X_k,X_i)) - X_k(g(X_i,X_j))\nonumber\\
&&\hspace{.2in} -g(X_j,[X_i,X_k]) - g(X_k,[X_j,X_i]) + g(X_i,[X_k,X_j])\Big]\nonumber
\eeqa
is term-by-term equivalent to its counterpart $\gL(\cdL{X_i}{X_j},X_k)$, as $g(X_\alpha,\cdot) = \gL(X_\alpha,\cdot)$ for $\alpha=i,j,k$. Thus
$$
\gL(\cdL{X_i}{X_j},X_k) = g(\cd{X_i}{X_j},X_k) \overset{\eqref{eqn:Riem}}{=} \gL(\cd{X_i}{X_j},X_k)-2\frac{\gL(T,\cd{X_i}{X_j})\cancelto{0}{\gL(T,X_k)}}{\gL(T,T)},
$$
so that
\beqa
\label{eqn:first}
\gL(\cdL{X_i}{X_j}-\cd{X_i}{X_j},X_k) = 0.
\eeqa
Now putting $T$ in place of $X_k$,
\beqa
g(\cd{X_i}{X_j},T) \!\!&=&\!\! \frac{1}{2}\Big[X_i(g(X_j,T)) + X_j(g(T,X_i)) - T(g(X_i,X_j))\nonumber\\
&&\hspace{.2in} -g(X_j,[X_i,T]) - g(T,[X_j,X_i]) + g(X_i,[T,X_j])\Big]\nonumber
\eeqa
is term-by-term equivalent to its counterpart $\gL(\cdL{X_i}{X_j},T)$\,---\,except for the term
$$
g(T,[X_j,X_i]) \overset{\eqref{eqn:Riem}}{=} -\gL(T,[X_j,X_i]),
$$
so that
$$
\underbrace{\,\gL(\cdL{X_i}{X_j},T) + \gL(T,[X_j,X_i])\,}_{\text{$= \gL(\cdL{X_j}{X_i},T) \overset{(*)}{=} -\gL(\cdL{X_i}{X_j},T)$}} = g(\cd{X_i}{X_j},T) \overset{\eqref{eqn:Riem}}{=} -\gL(\cd{X_i}{X_j},T)
$$
where $(*)$ holds because $T$ is a Killing vector field,
$$
\mathfrak{L}_{T}g_{\scalebox{0.3}{\emph{L}}} = 0 \imp \gL(\cdL{X_i}{T},X_j) = -\gL(\cdL{X_j}{T},X_i).
$$
Thus $\gL(\cdL{X_i}{X_j}-\cd{X_i}{X_j},T) = 0$.  Combining this with \eqref{eqn:first} yields
$$
\gL(\cdL{X_i}{X_j}-\cd{X_i}{X_j},\cdot) = 0,
$$
hence $\cdL{X_i}{X_j} = \cd{X_i}{X_j}$. Using this, $\cd{X_i}{T} = -\cdL{X_i}{T} + X_i(\text{ln}(g(T,T))T$ can be derived, from which $\cd{T}{X_i} = \cdL{T}{X_i} -2\cdL{X_i}{T} + X_i(\text{ln}(g(T,T))T$ follows immediately via $\cd{T}{X_i}-\cd{X_i}{T} = [T,X_i] = \cdL{T}{X_i}-\cdL{X_i}{T}$. Finally, using the Killing condition, the case $\cd{T}{T} = -\cdL{T}{T}$ follows easily, via
$$
g(\cd{T}{T},X_i) = -\frac{1}{2}X_i(g(T,T)) = \gL(\cdL{X_i}{T},T) = -\gL(\cdL{T}{T},X_i) = -g(\cdL{T}{T},X_i)
$$
and the fact that $g(\cd{T}{T},T) = g(-\cdL{T}{T},T) = 0$.
\end{proof}

Armed with Proposition \ref{prop:LC}, we now relate the Riemann curvature 4-tensors of $g$ and $\gL$. In what follows, our sign convention for the Riemann curvature endomorphism is
$
R_{\scalebox{0.4}{$L$}}(X,Y)Z = \cd{X}{\cd{Y}{Z}} - \cd{Y}{\cd{X}{Z}} - \cd{[X,Y]}{Z}.
$
Although this sign is the opposite of that in \cite{PW}, this is compensated for by the fact that our curvature operator \eqref{eqn:co0} below is defined with a minus sign, which \cite{PW} does not have.

\begin{prop}
\label{prop:R}
With respect to the local frame $\{T,X_1,\dots,X_{n-1}\}$ of Proposition \ref{prop:LC}, the components of the Riemann curvature 4-tensors \emph{$\Rmr$} and \emph{$\RmL$} of $g$ and $\gL$, respectively, are related as follows:
\end{prop}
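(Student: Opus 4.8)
The plan is to substitute the connection identities of Proposition~\ref{prop:LC} directly into the formula $\cd{X}{\cd{Y}{Z}} - \cd{Y}{\cd{X}{Z}} - \cd{[X,Y]}{Z}$ for the Riemann endomorphism of $g$, and into the analogous formula for $R^L$ built from $\LCL$, then to read off the difference by splitting into cases according to how many of the four slots are occupied by $T$. Every correction term will be built from just two objects: the function $\phi \defeq \ln g(T,T)$ together with its directional derivatives $X_i(\phi)$, and the bundle endomorphism $A\colon X \mapsto \cdL{X}{T}$, which is skew-symmetric with respect to both $\gL$ and $g$ precisely because $T$ is Killing, i.e. $\gL(\cdL{X_i}{T},X_j) = -\gL(\cdL{X_j}{T},X_i)$. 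Throughout one uses that $\{T,X_1,\dots,X_{n-1}\}$ is simultaneously $g$- and $\gL$-orthogonal, so that pairing any vector against some $X_l$ returns the same scalar for $g$ and for $\gL$, whereas pairing against $T$ introduces a sign from $g(T,\cdot) = -\gL(T,\cdot)$.

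First I would handle the purely spatial components $R_{ijkl}$. Since $\cd{X_i}{X_j} = \cdL{X_i}{X_j}$, the operators $\cd{X_i}{}$ and $\cdL{X_i}{}$ can disagree inside $\cd{X_i}{\cd{X_j}{X_k}}$ only through their action on the $T$-component of $\cdL{X_j}{X_k}$; by the Leibniz rule and $\gL(X_k,T)=0$ that component equals $-\gL(X_k,\cdL{X_j}{T})/\gL(T,T)$, a multiple of $A$. The same remark applies to $\cd{[X_i,X_j]}{X_k}$ once $[X_i,X_j] = \cd{X_i}{X_j} - \cd{X_j}{X_i}$ is used to isolate its $T$-component, again a multiple of $A$ (here skew-symmetry of $A$ already enters). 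Substituting $\cd{X_i}{T} = -\cdL{X_i}{T} + X_i(\phi)T$ and pairing the whole expression against $X_l$, so that the $T$-terms are annihilated, leaves a correction quadratic in $A$; the contractions forced on us reassemble, using skew-symmetry, into the asserted closed form, and one can check directly that the answer respects the pair symmetry and the first Bianchi identity — which, given that $A$ is skew, it does automatically.

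The mixed components $R_{Tijk}$, $R_{TijT}$, and $R_{TiTj}$ are obtained by the same mechanism, now also feeding in $\cd{T}{X_i} = \cdL{T}{X_i} - 2\cdL{X_i}{T} + X_i(\phi)T$, $\cd{X_i}{T} = -\cdL{X_i}{T} + X_i(\phi)T$, $\cd{T}{T} = -\cdL{T}{T}$, and the bracket $[T,X_i] = \cdL{T}{X_i} - \cdL{X_i}{T}$. This time the $T$-component of $\cdL{X_i}{T}$, which equals $\tfrac12 X_i(\phi)$, is not killed, so first and second derivatives of $\phi$ survive, and whenever $T$ sits in the last slot one collects the extra sign from $g(T,\cdot)=-\gL(T,\cdot)$. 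Sorting the output by the number of $A$-factors then yields, in each case, the corresponding $\RmL$ component plus a piece linear in $\nabla\phi$ and $\nabla^2\phi$ plus a piece quadratic in $A$ — exactly the list of formulas stated. (Under the unit-length normalization $g(T,T)\equiv 1$ all $\phi$-terms drop, leaving only the $A$-quadratic corrections, and after block-diagonalizing $A$ into the functions $f$ in the distinguished frame of Proposition~\ref{prop:basis} one recovers the matrices of the theorem in the introduction.)

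The computation is lengthy but entirely mechanical; the one genuine difficulty is the bookkeeping — correctly isolating the $T$-components of the various covariant derivatives and not dropping the $X_i(\phi)$ contributions — after which the skew-symmetry of $A$ collapses the many raw terms into the compact expressions displayed. I expect that bookkeeping, rather than any conceptual point, to be the main obstacle.
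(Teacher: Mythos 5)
Your proposal follows essentially the same route as the paper: substitute the connection relations of Proposition~\ref{prop:LC} into the definition of the curvature endomorphism, split into cases by how many slots carry $T$, and use the Killing skew-symmetry of $X\mapsto\cdL{X}{T}$ together with $g(X_i,\cdot)=\gL(X_i,\cdot)$ and $g(T,\cdot)=-\gL(T,\cdot)$ to collapse the terms; the paper likewise works out only the two representative components in detail and declares the rest analogous. One small caveat: the final formulas contain no $\nabla^2\phi$ contributions (only terms quadratic in $X_i(\gL(T,T))$ and in $\cdL{X_i}{T}$), so the second-derivative pieces you anticipate must cancel against their $\RmL$ counterparts, which is part of the bookkeeping you correctly flag as the main burden.
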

\vskip-16pt
\beqa
\arraycolsep=1.4pt\def\arraystretch{1.5}
\left\{\begin{array}{lr}
\Rmr(X_i,X_j,T,X_k) = -\RmL(X_i,X_j,T,X_k),\\
\Rmr(T,X_i,T,X_j) = -\RmL(T,X_i,T,X_j) - 2\gL(\cdL{X_i}{T},\cdL{X_j}{T})\\
\hspace{2.2in}+\,\frac{1}{2g_{\scalebox{0.3}{$L$}}(T,T)}X_i(g_{\scalebox{0.3}{$L$}}(T,T))X_j(g_{\scalebox{0.3}{$L$}}(T,T))\label{eqn:RmL}\\
\Rmr(X_i,X_j,X_k,X_l) = \RmL(X_i,X_j,X_k,X_l)\\
\hspace{1.3in} +\,\frac{2}{g_{\scalebox{0.3}{$L$}}(T,T)}\Big[\gL(\cdL{X_i}{T},X_l)\gL(\cdL{X_j}{T},X_k)\\
\hspace{2.1in}-\,\gL(\cdL{X_i}{T},X_k)\gL(\cdL{X_j}{T},X_l)\\
\hspace{2.5in} -2\gL(\cdL{X_i}{T},X_j)\gL(\cdL{X_k}{T},X_l)\Big].
\label{eqn:Rm}\\
\end{array}\right.
\eeqa

\begin{proof}
Most of these components can be found in \cite[Section~3]{olea} in the case of a general unit-length timelike vector field $T$. As we will need all components of $\Rmr,\RmL$, let us derive in detail the first two components in \eqref{eqn:Rm} here (and also given that our $T$ may not have unit length). Thus, consider the component $\Rmr(X_i,X_j,T,X_k)$, and begin by writing it as
\beqa
\Rmr(X_i,X_j,T,X_k) \!\!&=&\!\! g(\cd{X_i}{\!\cd{X_j}{T}}-\cd{X_j}{\!\cd{X_i}{T}}-\cd{[X_i,X_j]}{T},X_k)\nonumber\\
&=&\!\! X_i(g(\cd{X_j}{T},X_k)) - g(\cd{X_j}{T},\cd{X_i}{X_k})\nonumber\\
&& -X_j(g(\cd{X_i}{T},X_k)) + g(\cd{X_i}{T},\cd{X_j}{X_k})\nonumber\\
&&\hspace{1.5in}-g(\cd{[X_i,X_j]}{T},X_k).\nonumber
\eeqa
Using Proposition \ref{prop:LC}, the Killing condition, and the fact that $g(\cdot,X_k) = \gL(\cdot,X_k)$, the terms on the right-hand side can be written as
\beqa
\Rmr(X_i,X_j,T,X_k) \!\!&=&\!\! -X_i(\gL(\cdL{X_j}{T},X_k)) + \gL(\cd{\cd{X_i}{X_k}}{T},X_j)\nonumber\\
&&\ \ +\, X_j(\gL(\cdL{X_i}{T},X_k)) - \gL(\cd{\cd{X_j}{X_k}}{T},X_i)\label{eqn:Rstep1}\\
&&\hspace{1.6in}-\gL(\cd{[X_i,X_j]}{T},X_k).\nonumber
\eeqa
Next, expanding $\cd{X_i}{X_k} = cT+ b^\lambda X_\lambda = \cdL{X_i}{X_k}$, we have that
\beqa
\gL(\cd{\cd{X_i}{X_k}}{T},X_j) \!\!&=&\!\! c\gL(\cd{T}{T},X_j)+b^\lambda \gL(\cd{X_\lambda}{T},X_j)\nonumber\\
&\overset{\eqref{eqn:LCs}}{=}&\!\! -c\gL(\cdL{T}{T},X_j)-b^\lambda \gL(\cdL{X_\lambda}{T},X_j)\nonumber\\
&=&\!\! -\gL(\cdL{\cdL{X_i}{X_k}}{T},X_j).\nonumber
\eeqa
Likewise, $\gL(\cd{\cd{X_j}{X_k}}{T},X_i) = -\gL(\cdL{\cdL{X_j}{X_k}}{T},X_i)$.  A similar expansion of $[X_i,X_j]$ yields $\gL(\cd{[X_i,X_j]}{T},X_k) = -\gL(\cdL{[X_i,X_j]}{T},X_k)$. Inserting these into \eqref{eqn:Rstep1} and simplifying yields
$$
\Rmr(X_i,X_j,T,X_k) = -\RmL(X_i,X_j,T,X_k).
$$
A similar computation occurs for the component
\beqa
\Rmr(X_i,T,T,X_j) \!\!&=&\!\! g(\cd{X_i}{\!\cd{T}{T}}-\cd{T}{\!\cd{X_i}{T}}-\cd{[X_i,T]}{T},X_j)\nonumber\\
&=&\!\! X_i(g(\cd{T}{T},X_j)) - g(\cd{T}{T},\cd{X_i}{X_j})\nonumber\\
&& -T(g(\cd{X_i}{T},X_j)) + g(\cd{X_i}{T},\cd{T}{X_j})\label{eqn:Rstep3}\\
&&\hspace{1.5in}-g(\cd{[X_i,T]}{T},X_j).\nonumber
\eeqa
Once again, the terms here transform identically into (minus) their $\gL$- and $\nabla^{\scalebox{0.4}{$L$}}$-counterparts\,---\,except for $g(\cd{X_i}{T},\cd{T}{X_j})$, which, using Proposition \ref{prop:LC} and the Killing condition, yields the additional terms
\beqa
g(\cd{X_i}{T},\cd{T}{X_j}) \!\!&=&\!\! -\gL(\cdL{X_i}{T},\cdL{T}{X_j}) + 2\gL(\cdL{X_i}{T},\cdL{X_j}{T})\nonumber\\
&&\hspace{1in} - \frac{X_i(g_{\scalebox{0.4}{$L$}}(T,T))X_j(g_{\scalebox{0.4}{$L$}}(T,T))}{2g_{\scalebox{0.4}{$L$}}(T,T)}\cdot\nonumber
\eeqa
Inserting these back into \eqref{eqn:Rstep3}, we have
\beqa
\Rmr(X_i,T,T,X_j) \!\!&=&\!\! -\RmL(X_i,T,T,X_j) + 2\gL(\cdL{X_i}{T},\cdL{X_j}{T})\nonumber\\
&&\hspace{1in}-\frac{X_i(g_{\scalebox{0.4}{$L$}}(T,T))X_j(g_{\scalebox{0.4}{$L$}}(T,T))}{2g_{\scalebox{0.4}{$L$}}(T,T)},\nonumber
\eeqa
which implies \eqref{eqn:RmL}. We omit the details of the computation for the component $\Rmr(X_i,X_j,X_k,X_l)$, as they are similar to the cases above.
\end{proof}

%%%%%%%%%%%
\section{Motivating Example}

Now, for a given compact stationary spacetime $(M,\gL)$, one can, of course, draw topological information simply by applying \cite{PW} to the curvature operator of the Riemannian metric
\beqa
\label{eqn:1*}
g \defeq \gL - 2\frac{T^\flat \otimes T^\flat}{\gL(T,T)} \comma T^\flat \defeq \gL(T,\cdot).
\eeqa
Indeed, if the curvature operator of $g$ is $(n-p)$-positive for $1 \leq p \leq \lfloor \frac{n}{2}\rfloor$, then \cite{PW} directly yields $b_1=\cdots = b_p = 0$ and $b_{n-p} = \cdots = b_{n-1} = 0$.  (Recall that the \emph{curvature operator} $\co\colon \Lambda^2\lra \Lambda^2$ of $g$ is the linear endomorphism defined by  
\beqa
\label{eqn:co0}
\ipr{\co(v\wedge w)}{x\wedge y} \defeq -\Rmr(v,w,x,y)\hspace{.2in}\text{for all $v,w,x,y \in T_pM$},
\eeqa
where $\ipr{\,}{}$ is the $g$-induced inner product $\ipr{\,}{}$ on $\Lambda^2$:
$$
\ipr{\ww{v}{w}}{\ww{x}{y}}  \defeq \text{det}\begin{bmatrix}
g(v,x) & g(v,y)\\
g(w,x) & g(w,y)
\end{bmatrix}\cdot
$$
The curvature operator $\hat{R}_{\scalebox{0.4}{$L$}}$ of $\gL$ is defined in the same way, but with $\RmL$ in place of $\Rmr$ and $\ipl{\,}{}$ in place of $\ipr{\,}{}$.) However, our goal here is to use Proposition \ref{prop:R} to work with the curvature operator of $\gL$ \emph{directly}. Indeed, we would like to view \cite{PW} as allowing us to bypass the fact that the machinery of the Lichnerowicz Laplacian has no Lorentzian analogue, or indeed the more basic fact that the curvature operator of a Lorentzian metric is not symmetric in general (see \eqref{eqn:sym0} and \eqref{eqn:sym1}), hence may not have real eigenvalues to begin with.  To that end, glancing at the non-$\RmL$-terms in the right-hand side of \eqref{eqn:RmL}, a natural question arises: In what cases will these terms, or at least most of them, vanish?  Because if they do, then the Riemannian curvature operator $\co$ of $g$ can be regarded as the ``symmetrization" of the curvature operator $\hat{R}_{\scalebox{0.4}{$L$}}$ of $\gL$\,---\,and thus topological information can be drawn, via \cite{PW}, ``purely from Lorentzian data." To illustrate this point, consider the following example: The manifold $\mathbb{S}^3$ with the following Lorentzian metric in local coordinates $(t,\theta_1,\theta_2)$, with $t \in \big[0,\frac{\pi}{2}\big]$:
\beqa
\label{cor:check}
\gL = \begin{pmatrix}
1 & 0 & 0\\
0 & \sin^2\!t(1-2\sin^2\!t) & -2\sin^2\!t\cos^2\!t\\
0 & -2\sin^2\!t\cos^2\!t & \cos^2\!t(1-2\cos^2\!t)
\end{pmatrix}\cdot
\eeqa
The vector field $T \defeq \partial_{\theta_1}+\partial_{\theta_2}$ is a unit-length timelike Killing vector field with respect to $\gL$. But it is more than that, for in fact the corresponding Riemannian metric \eqref{eqn:1*} is the standard (round) metric on $\mathbb{S}^3$ and $T$ is the well known Hopf Killing vector field:
\beqa
\label{eqn:round}
g = \gL - 2\frac{T^\flat \otimes T^\flat}{\gL(T,T)} = dt^2 + \sin^2\!t\,d\theta_1^2 + \cos^2\!t\,d\theta_2^2.
\eeqa
(See, e.g., (see \cite[p.~23.]{PP}.) With respect to the $g$- and $\gL$-orthonormal basis
\beqa
\label{eqn:simple0}
T = \partial_{\theta_1}+\partial_{\theta_2} \comma X_1 \defeq \partial_t \comma X_2 \defeq \cot\!t\,\partial_{\theta_1}-\tan\!t\,\partial_{\theta_2},
\eeqa
it is straightforward to verify that 
\beqa
\label{eqn:simple}
\arraycolsep=1.4pt\def\arraystretch{1.5}
\left\{\begin{array}{lr}
\cd{T}{T} = 0 = \cdL{T}{T},\\
\cd{X_1}{T} = X_2 = -\cdL{X_1}{T},\\
\cd{X_2}{T} = -X_1 = -\cdL{X_2}{T},
\end{array}\right.
\eeqa
that $[T,X_1] = [T,X_2] = 0, [X_1,X_2] = -2T-(\cot t-\tan t)X_2$, and that
\beqa
\label{eqn:simple2}
\arraycolsep=1.4pt\def\arraystretch{1.5}
\left\{\begin{array}{lr}
\cd{X_1}{X_1} = 0 = \cdL{X_1}{X_1},\\
\cd{X_1}{X_2} = -T = \cdL{X_1}{X_2},\\
\cd{X_2}{X_1} = T+(\cot t-\tan t)X_2 = \cdL{X_2}{X_1}.
\end{array}\right.
\eeqa
If we now express the curvature operator $\co$ of $g$ with respect to the $\ipr{\,}{}$-orthonormal basis
$
\{\ww{T}{X_1}\,,\,\ww{T}{X_2}\,,\,\ww{X_1}{X_2}\} \subseteq \Lambda^2
$ (recall that $\Lambda^2$ is ${n \choose 2}$-dimensional in general), it will be given by
$$
\co = -\!\begin{bmatrix}
\Rmr(T,X_1,T,X_1) & \Rmr(T,X_2,T,X_1) & \Rmr(X_1,X_2,T,X_1)\\
\Rmr(T,X_1,T,X_2) & \Rmr(T,X_2,T,X_2) & \Rmr(X_1,X_2,T,X_2)\\
\Rmr(T,X_1,X_1,X_2) & \Rmr(T,X_2,X_1,X_2) & \Rmr(X_1,X_2,X_1,X_2)
\end{bmatrix}\cdot
$$
But, using \eqref{eqn:simple}, \eqref{eqn:simple2}, and \eqref{eqn:RmL}, we can rewrite this entirely in terms of $\RmL$,
\beqa
\label{eqn:sym00}
\begin{bmatrix}
\RmL(T,X_1,T,X_1)+2 & \RmL(T,X_2,T,X_1) & \RmL(X_1,X_2,T,X_1)\\
\RmL(T,X_1,T,X_2) & \RmL(T,X_2,T,X_2)+2 & \RmL(X_1,X_2,T,X_2)\\
\RmL(T,X_1,X_1,X_2) & \RmL(T,X_2,X_1,X_2) & -\RmL(X_1,X_2,X_1,X_2)-6
\end{bmatrix},
\eeqa
which simplifies to
$$
\co = \begin{bmatrix} -1+2&0&0\\ 0 &-1+2&0\\ 0&0&7-6\end{bmatrix}\cdot
$$
For $1 \leq p \leq \lfloor \frac{3}{2}\rfloor$, \cite{PW} now yields that because $\co$ is $(3-1)=2$-positive, the Betti numbers of $M=\mathbb{S}^3$ satisfy $b_1=b_{2} = 0$, which is, of course, true.  The salient point, however, is that because the frame \eqref{eqn:simple0} satisfied the relations \eqref{eqn:simple}, the Lorentzian curvature operator $\hat{R}_{\scalebox{0.4}{$L$}}$ of $\gL$,
\beqa
\label{eqn:sym0}
\hat{R}_{\scalebox{0.4}{$L$}} = \begin{bmatrix}
\RmL(T,X_1,T,X_1) & \RmL(T,X_2,T,X_1) & \RmL(X_1,X_2,T,X_1)\\
\RmL(T,X_1,T,X_2) & \RmL(T,X_2,T,X_2) & \RmL(X_1,X_2,T,X_2)\\
-\RmL(T,X_1,X_1,X_2) & -\RmL(T,X_2,X_1,X_2) & -\RmL(X_1,X_2,X_1,X_2)\end{bmatrix}
\eeqa
is very closely aligned with $\co$. Indeed, the diagonal entries $2,2,-6$ nothwithstanding, observe how \eqref{eqn:sym00} is precisely the ``symmetrization" of $\hat{R}_{\scalebox{0.4}{$L$}}$ that we mentioned in the Introduction.

%%%%%%%%%%%%%
\section{Distinguished Local Frames}
As we now show, this is not unique to the frame \eqref{eqn:simple0} or to the manifold $\mathbb{S}^3$; on the contrary, whenever $T$ has unit length, such frames will always lie at our disposal (cf.~\cite{RS96} and \cite[Section~5.3(c)]{S97}). In Section \ref{sec:main}, we will show that with respect to these frames the curvature operators $\co$ and $\hat{R}_{\scalebox{0.4}{$L$}}$ will be so closely aligned that $\co$ will effectively ``symmetrize" $\hat{R}_{\scalebox{0.4}{$L$}}$.

\begin{prop}
\label{prop:basis}
Let $(M,\gL)$ be a Lorentzian $n$-manifold with $n\geq 3$ and with unit-length timelike Killing vector field $T$\emph{;} assume $T$ is not parallel. At every point of $M$, there is a local orthonormal frame $\{T,X_1,\dots,X_{n-1}\}$ such that an even number of the $X_i$'s satisfies
\beqa
\label{eqn:even}
\cdL{X_i}{T} = f_{ii+1} X_{i+1} \comma \cdL{X_{i+1}}{T} = -f_{ii+1} X_i,
\eeqa
for some functions $f_{ii+1} < 0$, with any remaining $X_j$'s, along with $T$, satisfying $\cdL{T}{T} = \cdL{X_j}{T} = 0$. There are $\leq \lfloor\frac{n-1}{2}\rfloor$ such functions $f_{ii+1}$, with each $T(f_{ii+1}) = 0$ and each
\emph{$$
\RmL(X_{i},T,T,X_{i}) = f_{ii+1}^2 = \RmL(X_{i+1},T,T,X_{i+1}).
$$}Finally, \emph{$\RmL(X_i,T,T,X_{j}) = 0$} if $i\neq j$.
\end{prop}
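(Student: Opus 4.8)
The plan is to recognize this proposition as the block normal form of a skew-symmetric operator, applied to the bundle endomorphism $A$ defined by $A(X) \defeq \cdL{X}{T}$, restricted to the spacelike distribution $T^\perp$.

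First I would extract the pointwise algebra forced by ``$T$ unit-length Killing.'' Differentiating $\gL(T,T)\equiv -1$ gives $\gL(\cdL{X}{T},T) = \tfrac12 X(\gL(T,T)) = 0$ for all $X$, so $A$ is valued in $T^\perp$; the Killing identity $\gL(\cdL{X}{T},Y) = -\gL(\cdL{Y}{T},X)$ says $A$ is $\gL$-skew-adjoint, and putting $Y = T$ there yields $\gL(\cdL{T}{T},X) = -\gL(\cdL{X}{T},T) = 0$ for every $X$, hence $\cdL{T}{T} = 0$ and $T \in \ker A$. Since a unit timelike field is nowhere zero, $T^\perp$ is a rank-$(n-1)$ subbundle on which $\gL$ restricts to a positive-definite metric, and $A$ restricts to a genuinely skew-symmetric endomorphism of this Euclidean bundle.

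Then comes the linear algebra: at each point $A^2$ is symmetric and negative-semidefinite on the Euclidean space $T_p^\perp$, so it has an orthonormal eigenbasis with eigenvalues $-\lambda^2 \le 0$. On each eigenspace with $\lambda\ne 0$, $A$ acts as $\lambda$ times a $\gL$-compatible complex structure, which pairs the basis vectors into $2\times 2$ blocks $\begin{pmatrix}0 & -\lambda\\ \lambda & 0\end{pmatrix}$ (forcing such eigenspaces to be even-dimensional), while on $\ker A^2 = \ker A$ it vanishes. Relabeling so the $2k$ paired vectors carry consecutive indices, this reads precisely $\cdL{X_i}{T} = \lambda X_{i+1}$, $\cdL{X_{i+1}}{T} = -\lambda X_i$ on the blocks and $\cdL{X_j}{T} = 0$ otherwise, together with the already-known $\cdL{T}{T}=0$\,---\,i.e. the frame \eqref{eqn:even}. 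Swapping $X_i\leftrightarrow X_{i+1}$ inside a block turns $\lambda$ into $-\lambda$, so the coefficients can be taken to be $f_{ii+1} < 0$; the non-parallel hypothesis guarantees $A\not\equiv 0$, so the construction is non-vacuous. Finally $2k \le n-1$ gives $k \le \lfloor\frac{n-1}{2}\rfloor$ functions.

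The step needing care is upgrading this fiberwise construction to a bona fide smooth local frame with the $f_{ii+1}$ smooth functions, since the block decomposition of a skew operator is discontinuous where the multiplicities of the eigenvalues of $-A^2$ jump. I would handle it near a point $p$ via spectral projection: the spectrum of $-A^2$ sits in finitely many clusters (around the distinct values $\lambda_j(p)^2$ and around $0$) that remain separated by gaps on a neighborhood, so the associated spectral projections are smooth and split $T^\perp$ smoothly; on each subbundle bounded away from $0$, $J \defeq A(-A^2)^{-1/2}$ is a smooth $\gL$-orthogonal almost complex structure whose unitary frames supply the pairs $(X_i,X_{i+1})$ with smooth coefficient $f_{ii+1} = -\sqrt{-A^2}$, and the remaining cluster subbundle is $\ker A$. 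This is routine once one restricts to a neighborhood on which these multiplicities are locally constant (the open dense set of regular points); at other points the pointwise construction still delivers the frame at that point, which is all the curvature-operator computations in the sequel actually use. This smoothness bookkeeping is the main obstacle; everything else is the standard normal form for skew operators.
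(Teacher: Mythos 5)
Your proposal is correct and follows essentially the same route as the paper: both diagonalize the self-adjoint, negative-semidefinite square of the skew-adjoint operator $X \mapsto \cdL{X}{T}$ and pair eigenvectors of each nonzero eigenspace into $2\times 2$ blocks, the only cosmetic difference being that you work with $\gL$ restricted to $T^\perp$ while the paper uses the auxiliary Riemannian metric $g$ of \eqref{eqn:Riem} (these agree on $T^\perp$, since $\cdL{v}{T} = -\cd{v}{T}$ there). Your closing remark on the smoothness of the eigenframe where multiplicities jump is a point the paper's proof passes over silently, and your observation that the pointwise frame suffices for the curvature-operator computations is the right way to dispose of it.
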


\begin{proof}
(E.g., for the frame \eqref{eqn:simple0}, $f_{12} = -1$ and only $\cdL{T}{T} = 0$.) Observe that for $n=3$, \emph{any} orthonormal frame $\{T,X_1,X_2\}$ satisfies $\cdL{T}{T}=0$ and
$$
\cdL{X_1}{T} = f_{12}X_2\comma \cdL{X_2}{T}=-f_{12}X_1
$$
for some function $f_{12}$, simply by virtue of the Killing condition. But for $n\geq 4$ the Killing condition by itself yields only $\cdL{X_i}{T} = \sum_{j\neq i}\vep_{ij}f_{ij}X_j$, where $\vep_{ij} \defeq +1$ if $i < j$ and $-1$ if $i > j$, and $f_{ij} = f_{ji}$. Therefore, to prove the statement of the theorem for all $n\geq 3$, we proceed as follows. Consider the linear endomorphism $v \mapsto \cdL{v}{T}$ on $TM$ as well as its Riemannian counterpart, $v \mapsto \cd{v}{T}$, with $g$ given by \eqref{eqn:1*} as usual (with $\gL(T,T)=-1$). This endomorphism is skew-symmetric, but its composition with itself, namely, the linear endomorphism $\nabla T \circ \nabla T$ sending $v \mapsto \cd{\cd{v}{T}}{T}$, is $g$-self-adjoint, via two applications of the Killing condition:
$$
g(\cd{\cd{v}{T}}{T},w) = -g(\cd{w}{T},\cd{v}{T}) = g(v,\cd{\cd{w}{T}}{T}).
$$
As $g$ is positive-definite, $\nabla T \circ \nabla T$ thus has a local $g$-orthonormal basis of eigenvectors about every point of $M$.  Since the Killing vector field $T$ is assumed to have unit length, it follows that $\cd{T}{T} = 0$, so that $T$ can be taken as one of these basis eigenvectors; as a consequence, any basis including $T$ will necessarily be both $g$- and $\gL$-orthonormal.  We will now further modify this basis to produce the frame \eqref{eqn:even}. First, observe that the eigenvalues of $\nabla T \circ \nabla T$ must be nonpositive: If $v$ is a $g$-unit-length eigenvector with eigenvalue $\lambda$, then
\beqa
\label{eqn:vw}
\cd{\cd{v}{T}}{T} = \lambda v \imp \lambda = g(\cd{\cd{v}{T}}{T},v) = -g(\cd{v}{T},\cd{v}{T}) \leq 0,
\eeqa
with $\lambda = 0 \iff \cd{v}{T} = 0$. Furthermore, $\cdL{v}{T} = -\cd{v}{T}$ by Proposition \ref{prop:LC}, so that $\LCL T \circ \LCL T$ will have the same eigenvectors and eigenvalues. (E.g., the basis \eqref{eqn:simple0} had eigenvalues $0,-1,-1$.) Take now a $g$-unit-length eigenvector $v$ with nonzero eigenvalue $\lambda < 0$ (such a $\lambda$ must exist, as we are assuming that $T$ is not parallel), and set
$$
w \defeq \cd{v/{\sqrt{-\lambda}}}{T}.
$$
By \eqref{eqn:vw}, $w$ has unit length; but more than that, it is also an eigenvector of $\nabla T \circ \nabla T$ with eigenvalue $\lambda$,
$$
\cd{\cd{w}{T}}{T} = \cd{\cd{\cd{v/\sqrt{-\lambda}}{T}}{T}}{T} = \lambda \cd{v/\sqrt{-\lambda}}{T} = \lambda w,
$$
that is $g$-orthogonal to both $v$ and $T$:
$$
g(v,w) = g(v,\cd{v/\sqrt{-\lambda}}{T}) = 0 \commas g(T,w) = \frac{1}{2\sqrt{-\lambda}}v(g(T,T)) = 0.
$$
Another iteration brings us back to $v$ but with a sign change, precisely as we saw in \eqref{eqn:simple}:
$$
\cd{w}{T} = \cd{\cd{v/{\sqrt{-\lambda}}}{T}}{T} = \frac{\lambda}{\sqrt{-\lambda}}v=-\sqrt{-\lambda}\,v \comma \cd{v}{T} = \sqrt{-\lambda}\,w.
$$
This relationship between $v$ and $w$ implies that the nonzero eigenspaces of $\nabla T \circ \nabla T$ are even-dimensional. Indeed, assume there is another linearly independent eigenvector, $x$, with eigenvalue $\lambda$; we may assume that $x$ has unit length and is orthogonal to $v,w$. Then so is
$$
z \defeq \cd{x/{\sqrt{-\lambda}}}{T},
$$
because
$$
g(v,z) = g(v,\cd{x/{\sqrt{-\lambda}}}{T}) = -\frac{1}{\sqrt{-\lambda}}g(x,\cd{v}{T}) = -g(x,w) = 0;
$$
similarly, $g(w,z) = g(x,z) = 0$, so that the $\lambda$-eigenspace must be at least four-dimensional, and so on. (By self-adjointness, eigenvectors belonging to different eigenspaces will also be orthogonal.) With $v,w$, and $-\sqrt{-\lambda}$ playing the roles of $X_i,X_{i+1}$, and $f_{ii+1}$ in \eqref{eqn:even}, respectively, this proves the existence of a $\gL$-orthonormal basis $\{T,X_1,\dots,X_{n-1}\}$ satisfying \eqref{eqn:even}. (That there are $\leq \lfloor\frac{n-1}{2}\rfloor$ such functions $f_{ii+1}$ now follows.) For the remainder of the proof, assume for the moment that $h\defeq -\frac{1}{2}\gL(T,T)$ is not necessarily constant. Then the following equation is well known: For all vectors $V,W$,
\beqa
\label{eqn:known}
(\text{Hess}\,h)(V,W) = \gL(\cdL{V}{T},\cdL{W}{T}) - \RmL(V,T,T,W).
\eeqa 
(By analyzing the points on $M$ where $\text{Hess}\,h$ has an extremum, \eqref{eqn:known} leads to Berger's classical result that if an even-dimensional compact Riemannian manifold has positive sectional curvature, then every Killing vector field must have a zero; see, e.g., \cite[Theorem~8.3.1]{PP}. An analogous result, also relying on \eqref{eqn:known}, was recently shown to hold in the Lorentzian setting, in \cite[Theorem~3(A)]{RS2}.) Indeed, it is straightforward to verify that $\cdL{T}{T} = -\text{grad}h$, in which case
\beqa
(\text{Hess}\,h)(V,W) \!\!&=&\!\! \gL(\cdL{V}{\text{grad}h},W)\nonumber\\
&=&\!\! -\gL(\cdL{V}{\cdL{T}{T}},W)\nonumber\\
&=&\!\! -\RmL(V,T,T,W) - \hspace{-.35in}\underbrace{\,\gL(\cdL{T}{\!\cdL{V}{T}},W)\,}_{\text{$T(\gL(\cdL{V}{T},W)) - \gL(\cdL{V}{T},\cdL{T}{W})$}}\hspace{-.35in} - \overbrace{\,\gL(\cdL{[V,T]}{T},W)\,}^{\text{$-\gL(\cdL{W}{T},[V,T])$}}\nonumber\\
&=&\!\! -\RmL(V,T,T,W) + \gL(\cdL{V}{T},\cdL{W}{T})\nonumber\\
&&+\underbrace{\,\Big[\gL(\cdL{V}{T},\cdL{T}{W})-\gL(\cdL{W}{T},\cdL{T}{V})-T(\gL(\cdL{V}{T},W))\Big]\,}_{\text{$(*)$}}.\nonumber
\eeqa
Since $(*)$ is anti-symmetric in $V,W$ while the other terms are symmetric in $V,W$, we must have that $(*) = 0$, thus proving \eqref{eqn:known}. Reimposing now the unit-length condition $\gL(T,T) = -1$ yields $\text{Hess}\,h = 0$, from which it immediately follows from \eqref{eqn:even} and \eqref{eqn:known} that
$$
\RmL(X_i,T,T,X_j) = 0 \comma i\neq j
$$ 
and that
$$
\RmL(X_{i},T,T,X_{i}) = f_{ii+1}^2 = \RmL(X_{i+1},T,T,X_{i+1}).
$$
Finally, to prove that $T(f_{ii+1}) = 0$, we compute as follows:
\beqa
T(-2f_{ii+1}) \!\!&=&\!\! T(\gL(T,[X_i,X_{i+1}]))\nonumber\\
&=&\!\! \gL(T,\cdL{T}{\cdL{X_i}{X_{i+1}}}) - \gL(T,\cdL{T}{\cdL{X_{i+1}}{X_{i}}})\nonumber\\
&=&\!\! \cancel{\RmL(T,X_i,X_{i+1},T)} +\underbrace{\,\gL(\cdL{X_i}{\!\cdL{T}{X_{i+1}}},T)\,}_{\text{$-\gL(\cdL{T}{X_{i+1}},\cdL{X_i}{T})$}}+\underbrace{\,\gL(\cdL{[T,X_i]}{X_{i+1}},T)\,}_{\text{$\gL(\cdL{X_{i+1}}{T},[T,X_i])$}}\nonumber\\
&&\!\!-\cancel{\RmL(T,X_{i+1},X_{i},T)} -\underbrace{\,\gL(\cdL{X_{i+1}}{\!\cdL{T}{X_i}},T)\,}_{\text{$-\gL(\cdL{T}{X_{i}},\cdL{X_{i+1}}{T})$}}-\underbrace{\,\gL(\cdL{[T,X_{i+1}]}{X_{i}},T)\,}_{\text{$\gL(\cdL{X_{i}}{T},[T,X_{i+1}])$}}\nonumber\\
&=&\!\! -\gL(\cdL{T}{X_{i+1}},\cdL{X_i}{T}) +\gL(\cdL{T}{X_{i}},\cdL{X_{i+1}}{T})\nonumber\\
&&\hspace{.5in}+\,\gL(\cdL{X_{i+1}}{T},\cdL{T}{X_i}) - \cancel{\gL(\cdL{X_{i+1}}{T},\cdL{X_i}{T})}\nonumber\\
&&\hspace{1in}-\,\gL(\cdL{X_{i}}{T},\cdL{T}{X_{i+1}}) + \cancel{\gL(\cdL{X_{i}}{T},\cdL{X_{i+1}}{T})}\nonumber\\
&=&\!\! -2f_{ii+1}\gL(\cd{T}{X_{i+1}},X_i) -2f_{ii+1}\gL(\cd{T}{X_i},X_{i+1})\nonumber\\
&=&\!\! 0.\nonumber
\eeqa
This completes the proof.
\end{proof}

(Note that in dimension two, a unit-length timelike Killing vector field is necessarily parallel.) We would now like to generalize Proposition \ref{prop:basis} to the case when $T$ may not necessarily have unit length. Recalling \eqref{eqn:conf}, we know that $T$ will be a unit-length Killing vector field with respect to the conformal metric $\TgL \defeq \frac{\gL}{-\gL(T,T)}$. If we then apply Proposition \ref{prop:basis} to $\TgL$, we will obtain the following frame for $\gL$:

\begin{prop}
Let $(M,\gL)$ be a stationary spacetime with timelike Killing vector field $T$, and let $\TgL \defeq e^{2k}\gL = \frac{\gL}{-\gL(T,T)}$ be the conformal metric with respect to which $T$ is a unit-length Killing vector field. Then at every point of $M$ there exists a $\TgL$-orthonormal frame $\{T,X_1,\dots,X_{n-1}\}$ satisfying
$$
\cdL{X_i}{T} = f_{ii+1} X_{i+1} - X_i(k)T  \commas \cdL{X_{i+1}}{T} = -f_{ii+1} X_i - X_{i+1}(k)T,
$$
with the functions $f_{ii+1}$ defined as in \eqref{eqn:even}\emph{;} here $\cdL{}{}$ is the Levi-Civita connection of $\gL$. With respect to this frame, and setting $h\defeq -\frac{1}{2}\gL(T,T)$, the components \emph{$\RmL(X_i,T,T,X_j)$} and \emph{$\widetilde{\text{Rm}}_{\scalebox{0.4}{$L$}}(X_i,T,T,X_j)$} are related by
\emph{\beqa
\RmL(X_i,T,T,X_j) \!\!&=&\!\! -(\text{Hess}\,h)(X_i,X_j) +2h\,\widetilde{\text{Rm}}_{\scalebox{0.4}{$L$}}(X_i,T,T,X_j)\delta_{ij}\nonumber\\
&&\hspace{.5in} - (2h)^{-1}X_i(h)X_j(h).\label{eqn:even2}
\eeqa}
\end{prop}

\begin{proof}
This follows from the conformal formula between $\cdL{}{}$ and $\widetilde{\nabla}^{\scalebox{0.4}{$L$}}$; writing $\gL = e^{2(-k)}\TgL$ with $-k = \frac{1}{2}\text{ln}(-\gL(T,T))$, we have
$$
\cdL{X_i}{T} = \widetilde{\nabla}^{\scalebox{0.4}{$L$}}_{\!X_i}{T} + X_i(-k)T + \cancelto{0}{T(-k)}X_i - \cancelto{0}{\TgL(X_i,T)}\widetilde{\text{grad}}(-k),
$$
which yields $f_{ii+1}X_{i+1} -X_i(k)T$; similarly with $\cdL{X_{i+1}}{T}$. Inserting these into \eqref{eqn:known} and using Proposition \ref{prop:basis} then yields \eqref{eqn:even2}.
\end{proof}

More generally, the conformal formula between $\widetilde{\text{Rm}}_{\scalebox{0.4}{$L$}}$ and $\RmL$ yields
$$
\widetilde{\text{Rm}}_{\scalebox{0.4}{$L$}} = e^{2k}\Big[\RmL - \text{Hess}\,k\,{\tiny \owedge}\,\gL +(dk\otimes dk)\,{\tiny \owedge}\,\gL-\frac{1}{2}\langle dk,dk\rangle_{\scalebox{0.4}{$L$}}(\gL\,{\tiny \owedge}\,\gL)\Big],
$$
though we will forego analyzing this and instead return to the case when $\gL(T,T) = -1$. Thus, take an $(M,\gL)$ with $T$ unit-length and not parallel, and suppose that $M$ is three-dimensional; then by Proposition \ref{prop:basis}, at every point of $M$ there exists a local $\gL$-orthonormal frame $\{T,X_1,X_2\}$ satisfying
\beqa
\label{eqn:nice0}
\cdL{T}{T} = 0 \comma \cdL{X_1}{T} = fX_2 \comma \cdL{X_2}{T} = -fX_1,
\eeqa
for some $f < 0$. If $M$ is four-dimensional, then at every point there exists a local $\gL$-orthonormal frame $\{T,X_1,X_2,X_3\}$ satisfying
\beqa
\label{eqn:nice1}
\cdL{T}{T} = \cdL{X_1}{T} = 0 \comma \cdL{X_2}{T} = fX_3 \comma \cdL{X_3}{T} = -fX_2,
\eeqa
for some function $f < 0$.  If five- or six-dimensional, then there are at most two such functions, etc. Observe in particular that \eqref{eqn:nice1} generalizes to all even dimensions: If $n$ is even, then the vector fields $X_1,\dots,X_{n-1}$ are odd in number; because the functions $f_{ii+1}$ occur in pairs, at least one $X_i$ must satisfy $\cdL{X_i}{T} = 0$. In particular, at least one $\RmL(X_i,T,T,X_i) = 0$ in all even dimensions. In any case, the virtue of the orthonormal frames \eqref{eqn:even} is the way in which they simplify many of the curvature components between $\Rmr$ and $\RmL$ appearing in \eqref{eqn:RmL}. We now return to an analysis of this relationship, and thence to our main result.

\section{Main Result}
\label{sec:main}

\begin{lemma}
\label{lemma:frame}
The local orthonormal frame of Proposition \ref{prop:basis} has the following properties:
\begin{enumerate}[leftmargin=*]
\item[i.] \emph{$\Rmr(X_i,T,T,X_j) = -\RmL(X_i,T,T,X_j)\big(1+2\delta_{ij}\big) = 0$}.
\item[ii.] If exactly three of the indices $i,j,k,l$ are distinct, then
\emph{$$
\Rmr(X_i,X_j,X_k,X_l) = \RmL(X_i,X_j,X_k,X_l).
$$}
\end{enumerate}
\end{lemma}

\begin{proof}
We use Proposition \ref{prop:basis}. If $\cdL{X_i}{T} = fX_{i+1}$, then \mbox{$\gL(\cdL{X_i}{T},\cdL{X_j}{T}) = \delta_{ij}f$,} which proves i. If exactly three of $i,j,k,l$ are distinct, then each of the products $\gL(\cdL{X_i}{T},X_l)\gL(\cdL{X_j}{T},X_k)$, $\gL(\cdL{X_i}{T},X_k)\gL(\cdL{X_j}{T},X_l)$, and $\gL(\cdL{X_i}{T},X_j)\gL(\cdL{X_k}{T},X_l)$ in \eqref{eqn:RmL} vanishes; this proves ii. 
\end{proof}

Thanks to such simplifications, we have the following curvature and topological results in the Lorentzian setting:

\begin{thm}
\label{thm:Tmain4}
Let $(M,\gL)$ be a closed, connected, and oriented Lorentzian 3-manifold with a unit-length timelike Killing vector field $T$. Relative to the local $\gL$-orthonormal frame \eqref{eqn:nice0}, set \emph{$\RmL(T,X_i,X_j,X_k) \defeq R_{Tijk}$}, etc., where \emph{$\RmL$} is the Riemann curvature 4-tensor of $\gL$.  If the symmetric matrix
\beqa
\label{eqn:Rfinal0}
\begin{bmatrix}
-R_{T1T1} & 0 & R_{12T1}\\
0 & -R_{T2T2} & R_{12T2}\\
R_{T112} & R_{T212} & -R_{1212}+3R_{T1T1}+3R_{T2T2}
\end{bmatrix}
\eeqa
is 2-positive, then $b_1 = b_2 = 0$. In general, if $(M,\gL)$ is odd-dimensional and the ${n\choose 2}\times {n \choose 2}$ analogue of this matrix is $(n-p)$-positive for $1 \leq p \leq \lfloor \frac{n}{2}\rfloor$, then $b_1=\cdots=b_p=b_{n-p}=\cdots=b_{n-1}=0$. If $(M,\gL)$ is four-dimensional, then relative to the local $\gL$-orthonormal frame \eqref{eqn:nice1}, the symmetric matrix
\beqa
\label{eqn:Rfinal}
\begin{bmatrix}
-R_{T1T1} & R_{T2T1} & R_{T3T1} & R_{23T1} & R_{31T1} & R_{12T1}\\
R_{T1T2} & -R_{T2T2} & R_{T3T2} & R_{23T2} & R_{31T2} & R_{12T2}\\
R_{T1T3} & T_{T2T3} & -R_{T3T3} & R_{23T3} & R_{31T3} & R_{12T3}\\
R_{T123} & R_{T223} & R_{T323} & -R_{2323}+3R_{T2T2}+3R_{T3T3} & -R_{3123} & -R_{1223}\\
R_{T131} & R_{T231} & R_{T331} & -R_{2331} & -R_{3131} & -R_{1231}\\
R_{T112} & R_{T212} & R_{T312} & -R_{2312} & -R_{3112} & -R_{1212}
\end{bmatrix}
\eeqa
cannot be 3-positive. If $(M,\gL)$ is $2n$-dimensional, then the ${2n\choose 2}\times {2n \choose 2}$ analogue of this matrix cannot be $n$-positive\emph{;} if it is $(2n-p)$-positive for $1 \leq p < n$, then $b_1=\cdots=b_p=b_{2n-p}=\cdots=b_{2n-1}=0$.
\end{thm}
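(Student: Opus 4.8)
The plan is to recognize each displayed matrix as the curvature operator $\co$ of the Riemannian metric $g$ of \eqref{eqn:Riem}, written out in a well‑chosen $\ipr{\cdot}{\cdot}$‑orthonormal basis of $\Lambda^2$, and then to feed $\co$ into \cite{PW}, supplemented in the even‑dimensional cases by the Poincar\'{e}--Hopf theorem. To set up the frame: if $T$ is parallel then $\cdL{v}{T}\equiv 0$, so by Proposition~\ref{prop:R} every bivector $T\wedge X_i$ lies in the kernel of $\co$; then $\co$ has at least $\dim M-1$ zero eigenvalues, none of the positivity hypotheses below can hold, and every assertion is vacuous. So assume $T$ is not parallel and invoke Proposition~\ref{prop:basis}: about every point of $M$ there is a $\gL$‑orthonormal frame $\{T,X_1,\dots,X_{n-1}\}$---automatically $g$‑orthonormal, as noted after \eqref{eqn:Riem}---satisfying \eqref{eqn:even} (in particular \eqref{eqn:nice0} when $n=3$ and \eqref{eqn:nice1} when $n=4$), with $\leq\lfloor(n-1)/2\rfloor$ functions $f_{ii+1}<0$ and with every remaining $X_j$, along with $T$, satisfying $\cdL{X_j}{T}=\cdL{T}{T}=0$.

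The heart of the argument is then to identify the displayed matrix with $\co$. Order the $\ipr{\cdot}{\cdot}$‑orthonormal basis of $\Lambda^2$ as $T\wedge X_1,\dots,T\wedge X_{n-1}$ followed by the $\binom{n-1}{2}$ bivectors $X_i\wedge X_j$ with the cyclic sign conventions of \eqref{eqn:Rfinal0}--\eqref{eqn:Rfinal}; by \eqref{eqn:co0} the $(a,b)$‑entry of the matrix of $\co$ is then $-\Rmr$ evaluated on the frame vectors indexing $e_a,e_b$. Substituting \eqref{eqn:even} into Proposition~\ref{prop:R}, and using that $\gL(T,T)\equiv-1$ kills every term $X_i(\gL(T,T))$ in \eqref{eqn:RmL}--\eqref{eqn:Rm}, each factor $\gL(\cdL{X_a}{T},X_b)$ becomes $\pm f$ or $0$ and each $\gL(\cdL{X_a}{T},\cdL{X_b}{T})$ becomes $f^2$ (when $a=b$ is a paired index) or $0$. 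A short computation then gives: the mixed block ($T\wedge X_i$ against $X_j\wedge X_k$) is simply $-\RmL(X_j,X_k,T,X_i)$, with no $f$‑correction; the $(T\wedge X_\bullet)$‑block is $-\RmL(T,X_i,T,X_j)$ together with $+2f_{ii+1}^2$ on the diagonal at each paired index and $0$ at each unpaired index; and the $(X_\bullet\wedge X_\bullet)$‑block is $\RmL(X_i,X_j,X_k,X_l)$ (with the displayed signs), carrying $-6f_{ii+1}^2$ on the diagonal entry of each pair‑plane $X_i\wedge X_{i+1}$ and, only when two distinct pairs occur---hence only for $n\geq 5$---possibly off‑diagonal $f$‑terms, which then sit only among the $-R_{ijkl}$ components. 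This reproduces \eqref{eqn:Rfinal0} for $n=3$, \eqref{eqn:Rfinal} for $n=4$, and the stated $\binom{n}{2}\times\binom{n}{2}$ analogues in general. Since $g$ is positive definite, $\co$ is self‑adjoint on $(\Lambda^2,\ipr{\cdot}{\cdot})$, so each such matrix is symmetric with real eigenvalues, which are those of $\co$ and in particular independent of the choice of frame satisfying \eqref{eqn:even}.

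With $\co$ now realized as the displayed matrix, the conclusions are immediate. Applying \cite{PW} to the closed oriented manifold $M$: if $\co$ is $(n-p)$‑positive with $1\leq p\leq\lfloor n/2\rfloor$, then $b_1=\cdots=b_p=0$ and $b_{n-p}=\cdots=b_{n-1}=0$; for $n$ odd this gives the odd‑dimensional statements, in particular $b_1=b_2=0$ when $n=3$ and $\co$ is $2$‑positive, and for $\dim M=2n$ with $1\leq p<n$ it gives the stated conditional vanishing. For the non‑existence statements in even dimensions, note that $T$ is nowhere zero (as $\gL(T,T)=-1$), so Poincar\'{e}--Hopf forces $\chi(M)=0$. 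If $\dim M=2n$ and the $\binom{2n}{2}\times\binom{2n}{2}$ matrix were $n$‑positive, then \cite{PW} (with ambient dimension $2n$ and $p=n$, permissible since $n=\lfloor 2n/2\rfloor$) would force $b_1=\cdots=b_n=0$ and $b_n=\cdots=b_{2n-1}=0$, hence $b_1=\cdots=b_{2n-1}=0$; as $M$ is connected and oriented, $b_0=b_{2n}=1$, so $\chi(M)=2$, contradicting $\chi(M)=0$. For the sharper statement when $\dim M=4$: were the $6\times 6$ matrix $3$‑positive, \cite{PW} with $p=1$ would give $b_1=b_3=0$, whence $\chi(M)=1+b_2+1=2+b_2\geq 2$, again contradicting $\chi(M)=0$.

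The one laborious step is the second paragraph: verifying that, in the frame of Proposition~\ref{prop:basis}, the matrix of $\co$ really is the displayed one, and in particular that the $f$‑corrections land exactly where claimed---diagonally as $+2f^2$ and $-6f^2$ in the low‑dimensional cases, and off‑diagonally only within the $-R_{ijkl}$ block when $n\geq 5$. This amounts to a direct but bookkeeping‑heavy substitution of \eqref{eqn:even} into \eqref{eqn:RmL}--\eqref{eqn:Rm} and a rearrangement into the chosen basis of $\Lambda^2$; the care is entirely in tracking the cyclic sign conventions and the cross‑terms between distinct pairs. By contrast, the invocations of \cite{PW} and of Poincar\'{e}--Hopf in the last paragraph are completely routine.
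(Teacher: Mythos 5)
Your proposal is correct and follows essentially the same route as the paper: identify each displayed matrix as the curvature operator $\co$ of the Riemannian metric \eqref{eqn:Riem} in the frame supplied by Proposition \ref{prop:basis}, apply \cite{PW}, and rule out the even-dimensional positivity via $\chi(M)=0$ together with $b_0=b_{2n}=1$ (the paper cites the standard fact that a compact Lorentzian manifold has vanishing Euler characteristic rather than invoking Poincar\'e--Hopf on $T$, but this is the same fact). Your explicit handling of the parallel-$T$ edge case and the more detailed bookkeeping of where the $f^2$-corrections land are additions the paper leaves implicit, not departures from its argument.
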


\begin{proof}
Let us start with the case when $(M,\gL)$ is three-dimensional: Together with \eqref{eqn:nice0}, Propositions \ref{prop:R}, \ref{prop:basis}, and Lemma \ref{lemma:frame}, the curvature operator $\co$ of the corresponding Riemannian metric \eqref{eqn:Riem} is easily verified to be \eqref{eqn:Rfinal0}, to which we now apply \cite{PW}. With $n=3$ and $1 \leq p \leq \lfloor \frac{3}{2}\rfloor$, the only option is $p=1$: If $\co$ is $(3-1)=2$-positive, then $b_1=b_{2} = 0$. The general odd-dimensional case, with a corresponding basis \eqref{eqn:even} guaranteed by Proposition \ref{prop:basis}, follows similarly, although the matrix will be more complicated than \eqref{eqn:Rfinal0}. Now suppose that $(M,\gL)$ is four-dimensional. Switching to the local frame given by \eqref{eqn:nice1}, $\co$ is easily verified to be \eqref{eqn:Rfinal}. With $n=4$ and $1 \leq p \leq \lfloor \frac{4}{2}\rfloor$, the options are now $p=1$ and $p=2$.  Consider $p=1$: If $\co$ is $(4-1)=3$-positive, then $b_1=b_{3} = 0$. To determine $b_2$, recall that a compact manifold $M$ admits a Lorentzian metric if and only its Euler characteristic is zero: $\chi(M) = 0 = b_0-b_1+b_2-b_3+b_4$ (see, e.g., \cite[Prop.~37,~p.~149]{o1983}).
However, by Poincar\'e duality and connectedness, $b_4 = b_0 =1$, hence this equation cannot be solved for (nonnegative) $b_2$. Thus $\co$ cannot be $3$-positive, and therefore cannot be $2$-positive, either. Indeed, if $\co$ is $(4-2)=2$-positive, then \cite{PW} dictates that $b_1 = b_2 = b_3 = 0$, which once again cannot occur. And so on in dimension $2n$: Regardless of the $f_{ii+1}$'s in the local frame guaranteed by Proposition \ref{prop:basis}, if $\co$ is $(2n-\lfloor \frac{2n}{2}\rfloor) =n$-positive, then $b_1=\cdots=b_n = \cdots = b_{2n-1} = 0$, by \cite{PW}; but then we would have $\chi(M) = 0 = b_0 +b_{2n} = 2$, a contradiction. (Note that, for $1 \leq p < n$, one has $\chi(M) = 0 = 1 - b_1 +b_2 - \cdots \pm b_n \cdots $, with $+$ in dimensions $2n = 4, 8, 12,\dots$ and $-$ in dimensions $2n=6,10,14,\dots$; for the former, only $1 \leq p \leq n-2$ is allowed, whereas for the latter, $1 \leq p \leq n-1$ is allowed.) 
\end{proof}

As $n$ increases, the following pattern occurs: If $(M,\gL)$ is five-dimensional and its corresponding $10 \times 10$ matrix (with at most two functions from \eqref{eqn:even}) is \emph{4-positive}, then $b_1 = b_4 = 0$; if it is \emph{3-positive}, then $b_1=b_2=b_3=b_4=0$. If $(M,\gL)$ is six-dimensional and its now $15 \times 15$ matrix (once again with at most two functions from \eqref{eqn:even}) is \emph{5-positive}, then $b_1 = b_5 = 0$; if it is \emph{4-positive}, then $b_1=b_2=b_4=b_5=0$ and $b_3 = 2$ (by Theorem \ref{thm:Tmain4}, it cannot be \emph{3-positive}). And so on in higher dimensions. Regardless of the positivity, for each $n$ these matrices should be compared with the corresponding\,---\,generally non-symmetric\,---\,Lorentzian curvature operator $\hat{R}_{\scalebox{0.4}{$L$}}$. The closest alignment between the two occurs in dimensions three (recall \eqref{eqn:sym0}) and four:
\beqa
\label{eqn:sym1}
\hat{R}_{\scalebox{0.4}{$L$}} = \begin{bmatrix}
R_{T1T1} & R_{T2T1} & R_{T3T1} & R_{23T1} & R_{31T1} & R_{12T1}\\
R_{T1T2} & R_{T2T2} & R_{T3T2} & R_{23T2} & R_{31T2} & R_{12T2}\\
R_{T1T3} & R_{T2T3} & R_{T3T3} & R_{23T3} & R_{31T3} & R_{12T3}\\
-R_{T123} & -R_{T223} & -R_{T323} & -R_{2323} & -R_{3123} & -R_{1223}\\
-R_{T131} & -R_{T231} & -R_{T331} & -R_{2331} & -R_{3131} & -R_{1231}\\
-R_{T112} & -R_{T212} & -R_{T312} & -R_{2312} & -R_{3112} & -R_{1212}
\end{bmatrix}\cdot
\eeqa
We conclude with the following application of our Theorem:

\begin{cor}
Let $(M,\gL)$ be a closed, connected, and oriented Lorentzian 3-manifold with a unit-length timelike Killing vector field $T$, Ricci tensor \emph{$\RicL$}, and scalar curvature \emph{$\text{scal}_{\scalebox{0.4}{$L$}}$}. If $T$ is not parallel, and if
\emph{
\beqa
\label{eqn:Ric0}
\text{scal}_{\scalebox{0.4}{$L$}}> 3\RicL(T,T) + |d\,\text{ln}(\RicL(T,T))|_{g_{\scalebox{0.3}{$L$}}}^2,
\eeqa}then $b_1=b_2=0$. If $\gL$ is replaced by a Riemannian metric $g$, then the inequality becomes \emph{$\text{scal} > \Ric(T,T) + |d\,\text{ln}(\Ric(T,T))|_{g}^2$}.
\end{cor}

\begin{proof}
By Proposition \ref{prop:basis}, there is a local orthonormal frame $\{T,X,Y\}$ satisfying 
$\cdL{X}{T} = fY, \cdL{Y}{T} = -fX$ for some smooth function $f < 0$ (nonzero because $T$ is not parallel) which satisfies 
$$
\RicL(T,T) = \RmL(X,T,T,X) + \RmL(Y,T,T,Y) = 2f^2.
$$
Recall also that $T(f) = 0$. Inserting this data into \eqref{eqn:Rfinal0} yields
\beqa
\begin{bmatrix}
-R_{T1T1} & R_{T2T1} & R_{12T1}\\
R_{T1T2} & -R_{T2T2} & R_{12T2}\\
R_{T112} & R_{T212} & -R_{1212}+3R_{T1T1}+3R_{T2T2}
\end{bmatrix} \!\!&=&\!\! \nonumber\\
&&\hspace{-2in}\begin{bmatrix}
f^2 & 0 & -X(f)\\
0 & f^2 & -Y(f)\\
-X(f) & -Y(f) & -4f^2+\frac{1}{2}\text{scal}_{\scalebox{0.4}{$L$}}
\end{bmatrix}\cdot\label{eqn:check3}
\eeqa
The eigenvalues of \eqref{eqn:check3} are
$$
f^2 \comma \underbrace{\,\frac{1}{2}\bigg(\!\!-\!3f^2+\frac{\text{scal}_{\scalebox{0.4}{$L$}}}{2} \pm \sqrt{\Big(5f^2-\frac{\text{scal}_{\scalebox{0.4}{$L$}}}{2}\Big)^{\!2}+4\big(X(f)^2+Y(f)^2\big)}\,\bigg)\,}_{\text{$\lambda_{\pm}$}}\cdot
$$
With $1 \leq p \leq \lfloor \frac{3}{2}\rfloor$, the only option is $p=1$; therefore, we determine when \eqref{eqn:check3} will be $(3-1)=2$-positive. Clearly $\lambda_- \leq \lambda_+$; in fact $f^2 \leq \lambda_+$ also, since
$$
\lambda_+ \geq f^2 \iff \sqrt{\Big(5f^2-\frac{\text{scal}_{\scalebox{0.4}{$L$}}}{2}\Big)^{\!2}+4\big(X(f)^2+Y(f)^2\big)} \geq 5f^2 - \frac{\text{scal}_{\scalebox{0.4}{$L$}}}{2}\cdot
$$
Thus the sum of the two smallest eigenvalues is
$$
\lambda_-+f^2 = -2f^2+ \frac{1}{2}\text{scal}_{\scalebox{0.4}{$L$}}-\lambda_+,
$$
where we've used the fact that the sum of all three eigenvalues is equal to the trace, $-2f^2+\frac{1}{2}\text{scal}_{\scalebox{0.4}{$L$}}$. The condition $-2f^2+\frac{1}{2}\text{scal}_{\scalebox{0.4}{$L$}}-\lambda_+>0$ is equivalent to
$$
-f^2+\frac{\text{scal}_{\scalebox{0.4}{$L$}}}{2} > \sqrt{\Big(5f^2-\frac{\text{scal}_{\scalebox{0.4}{$L$}}}{2}\Big)^{\!2}+4\big(X(f)^2+Y(f)^2\big)},
$$
which in turn is equivalent to
\beqa
\label{eqn:S}
\text{scal}_{\scalebox{0.4}{$L$}} > 6f^2 + \frac{X(f)^2+Y(f)^2}{f^2}\cdot
\eeqa
Since $2f^2 = \RicL(T,T)$ and
$$
d\,\text{ln}(\RicL(T,T)) = X\big(\text{ln}(\RicL(T,T))\big) X^{\flat} + Y\big(\text{ln}(\RicL(T,T))\big)Y^{\flat}, 
$$
the inequality \eqref{eqn:S} is precisely \eqref{eqn:Ric0}. By Theorem \ref{thm:Tmain4}, we must therefore have $b_1=b_2 = 0$. Finally, if $\gL$ is replaced by a Riemannian metric $g$, then \eqref{eqn:check3} becomes, after using \eqref{eqn:LCs} on the frame $\{T,X,Y\}$, the Riemannian curvature operator $\co$ takes the form
$$
\begin{bmatrix}
-R_{T1T1} & -R_{T2T1} & -R_{12T1}\\
-R_{T1T2} & -R_{T2T2} & -R_{12T2}\\
-R_{T112} & -R_{T212} & -R_{1212}
\end{bmatrix} = \begin{bmatrix}
f^2 & 0 & X(f)\\
0 & f^2 & Y(f)\\
X(f) & Y(f) & -2f^2+\frac{1}{2}\text{scal}
\end{bmatrix}\cdot
$$
As a consequence, \eqref{eqn:S} is modified to 
$$
\text{scal} > 2f^2 + \frac{X(f)^2+Y(f)^2}{f^2},
$$
from which the Riemannian inequality follows.
\end{proof}

As a check, one can verify that for the Lorentzian metric \eqref{cor:check} on $\mathbb{S}^3$, $\RicL(T,T)=2$ and $\text{scal}_{\scalebox{0.4}{$L$}} = 10$, so that \eqref{eqn:Ric0} is satisfied; similarly the standard Riemannian metric \eqref{eqn:round} on $\mathbb{S}^3$ satisfies $\Ric(T,T)=2$ and $\text{scal} = 6$, so that the Riemannian inequality is satisfied also. Finally, observe that if $T$ was parallel, then the universal cover of $(M,\gL)$ would split isometrically as $\RR\times N$, by the Lorentzian version of the de Rham splitting theorem \cite{wu}; similarly for the Riemannian case.

\section*{References}
\renewcommand*{\bibfont}{\footnotesize}
\printbibliography[heading=none]
\end{document}